\newtheorem{theorem}{Theorem}
\newtheorem{lemma}{Lemma}
\newtheorem{prop}{Proposition}
\newcommand{\p}{{\mathbb{P}}}
\newcommand{\e}{{\mathbb{E}}}
\title[Interaction Order Estimation in Tensor Curie-Weiss Models]{Interaction Order Estimation in Tensor Curie-Weiss Models}
\author[Mukherjee]{Somabha Mukherjee} 
\address{Department of Statistics and Data Science, National University of Singapore {\tt somabha@nus.edu.sg}}
\begin{document}
	
	\maketitle
	
	\begin{abstract} 
		In this paper, we consider the problem of estimating the interaction parameter $p$ of a $p$-spin Curie-Weiss model at inverse temperature $\beta$, given a single observation from this model. We show, by a contiguity argument, that joint estimation of the parameters $\beta$ and $p$ is impossible, which implies that estimation of $p$ is impossible if $\beta$ is unknown. These impossibility results are also extended to the more general $p$-spin Erd\H{o}s-R\'enyi Ising model. The situation is more delicate when $\beta$ is known. In this case, we show that there exists an increasing threshold function $\beta^*(p)$, such that for all $\beta$, consistent estimation of $p$ is impossible when $\beta^*(p) > \beta$, and for \textit{almost all} $\beta$, consistent estimation of $p$ is possible for $\beta^*(p)<\beta$.
	\end{abstract}
	
	\section{Introduction}
	The Ising model \cite{ising} was originally introduced in Physics as a model for ferromagnetism, and has since then found numerous interesting applications in diverse areas, such as image processing \cite{geman_graffinge}, neural networks \cite{neural}, spatial statistics \cite{spatial}, and disease mapping in epidemiology \cite{disease}. It is a discrete exponential family on the set of all binary tuples of a fixed length, with sufficient statistic given by a quadratic form, designed to capture pairwise dependence between the binary variables, arising from an underlying network structure. Unfortunately, in most real-life scenarios, pairwise interactions are not enough to explain all the complex dependencies arising in a network data, and one has to take into account higher-order interactions arising from peer-group effects. Multi-body interactions are also common in many other branches of science; for example in Chemistry \cite{multiat}, it is known that the atoms on a crystal surface do not just interact in pairs, but in triangles, quadruples and higher order tuples.
 A natural extension of the classical $2$-spin Ising model which is designed to capture higher-order  dependencies, is the $p$-spin Ising model
 \cite{barra, derrida,jaesungcw, jaesung1}, where the quadratic interaction term in the sufficient statistic is replaced by a multilinear polynomial of degree $p\ge 2$. The probability mass function of this model is given by:
	\begin{equation}\label{genising}
		\p_{\beta,p}(\bm x) := \frac{\exp\left\{\beta \sum_{1\le i_1,\ldots,i_p\le N} J_{i_1\ldots i_p} x_{i_1}\ldots x_{i_p}\right\}}{Z_N(\beta,p)} \quad \text{for}~\bm x\in \{-1,1\}^n
	\end{equation}
	where $\bm J := ((J_{i_1\ldots i_p}))_{(i_1,\ldots,i_p) \in [N]^p}$ is the \textit{interaction tensor}, $\beta >0$ is the \textit{interaction strength} (or inverse temperature in the physics literature), $p$ is the \textit{interaction order}, and $Z_N(\beta,p)$ is the normalizing constant, needed to ensure that the probabilities in \eqref{genising} add to $1$. 
 
Estimation of the parameter $\beta$ in the model \eqref{genising} assuming that the interaction order $p$ is known, has been studied exptensively in the past (see, for example, \cite{chspin,BM16,pg_sm} for the case $p=2$, and \cite{jaesung1, jaesungcw} for the case $p\ge 3$). However, to the best of our knowledge, the reverse question of estimating the interaction order $p$ given a single observation from the model \eqref{genising} even with known $\beta$, has not been addressed in the literature. This question may be quite hard for arbitrary underlying interaction tensors $\bm J$, which necessitates some convenient structural assumptions on this tensor. 

In this paper, we are going to assume that the tensor $\bm J$ has all entries equal to $N^{1-p}$, which corresponds to the $p$-spin Curie-Weiss model \cite{barra,barra2,jaesungcw} given by:
 \begin{equation}\label{cwm}
     \p_{\beta,p}(\bm x) := \frac{\exp\left\{\beta N \bar{x}^p\right\}}{Z_N(\beta,p)} \quad \text{for}~\bm x\in \{-1,1\}^n
 \end{equation}
where $\bar{x} := \frac{1}{N}\sum_{i=1}^N x_i$. Even under this structural assumption, the possibility of estimating $p$ depends on whether the interaction strength $\beta$ is known or not. We will show that consistent estimation of $p$ is impossible if $\beta$ is unknown, which will be a consequence of our argument on the impossibility of joint consistent estimation of $\beta$ and $p$. At the heart of these impossibility results is the fact that the sufficient statistic $\bar{X}$ in the model \eqref{cwm} converges to a mixture of point masses at the maximizers of a certain function $H_{\beta,p}$, and that these maximizers do not uniquely determine the tuple $(\beta,p)$. This idea is formalized by a contiguity argument between the Curie-Weiss measures and $N$-fold products of Rademacher distributions with the largest maximizer of $H_{\beta,p}$ as mean. It should be mentioned here, that the related problem of joint inestimability of $(\beta,h)$ for the classical $2$-spin Curie-Weiss model with an additional magnetic field parameter $h$ was addressed in \cite{pg_sm} using similar contiguity arguments. We also extend these impossibility results to the more general $p$-spin Erd\H{o}s-R\'enyi Ising models. 

The situation is more intricate when $\beta$ is known. In this case, we will show that there exists a strictly increasing threshold function $\beta^*(p)$, such that consistent estimation of $p$ is impossible  for $\beta^*(p)>\beta$. However, for \textit{almost all} $\beta$ (to be precise, for all but possibly countably many $\beta$), $p$ can be estimated consistently whenever $\beta^*(p) <\beta$. The question of exactly describing the exceptional set of countably many $\beta$s for which the region $\beta^*(p) < \beta$ is inestimable for $p$, is still open.
  
\section{Impossibility of Jointly Estimating $(\beta,p)$}
We start by showing that joint consistent estimation of $\beta$ and $p$ using only one sample $\bm X := (X_1,\ldots,X_N)$ from the model \eqref{cwm} is in general, impossible. Towards this, for every $m\in [0,1)$, define a set:
$$\Theta_m := \{(\beta,p)\in (0,\infty)\times D: m~\text{is the unique non-negative global maximizer of}~H_{\beta,p}\}$$
where $D$ denotes the set of all integers $\ge 3$, and
$$H_{\beta,p}(x) := \beta x^p - \frac{1}{2}\left\{(1+x)\log (1+x) + (1-x)\log (1-x)\right\}\quad \text{for}~x\in [0,1]~.$$
\begin{theorem}\label{thm01}
    For every $m\in [0,1)$ such that $|\Theta_m|\ge 2$, there does not exist any sequence of estimators (measurable functions of $\bm X$) which is consistent for $(\beta,p) \in \Theta_m$ under the model \eqref{cwm}.
\end{theorem}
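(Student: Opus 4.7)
The strategy is a contiguity argument. Since the exponent $\beta N \bar{x}^p$ in \eqref{cwm} depends on $\bt \in \sa$ only through the sample mean $\bar{x}$, the conditional law of $\bs$ given $\os$ is uniform on the corresponding level set of $\sa$ and is independent of $(\beta,p)$; hence the law of $\bs$ under $\p_{\beta,p}$ is fully encoded in the induced law of $\os$. The plan is to establish that, for every $(\beta,p) \in \Theta_m$, the conditional measure $\p_{\beta,p}(\cdot \mid \os > 0)$ is mutually contiguous with the product Rademacher measure $\mu_m^{\otimes N}$ of common mean $m$. Any two such conditional measures will then be mutually contiguous via this common reference, which rules out consistent separation of two distinct elements of $\Theta_m$.

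Writing $\eta := \tanh^{-1}(m)$ so that $\mu_m(\pm 1) = (1\pm m)/2$, the log-likelihood ratio is
\begin{equation*}
\log \frac{d\p_{\beta,p}}{d\mu_m^{\otimes N}}(\bt) \;=\; N\bigl(\beta \bar{x}^p - \eta \bar{x}\bigr) + N\log(2\cosh \eta) - \log Z_N(\beta,p).
\end{equation*}
Setting $f(x) := \beta x^p - \eta x$, the optimality condition $\hbp'(m) = 0$ is exactly $f'(m) = 0$. A direct computation using $\eta = \tfrac{1}{2}\log\tfrac{1+m}{1-m}$ yields the identity $f(m) + \log(2\cosh\eta) = \log 2 + \hbp(m)$, and a standard Laplace approximation gives $\log Z_N(\beta,p) = N[\log 2 + \hbp(m)] - \tfrac{1}{2}\log\bigl[(1-m^2)|\hbp''(m)|\bigr] + O(1)$ (with an additive $\log 2$ in the even-$p$ case from the symmetric mode $-m$). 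Hence the log-likelihood ratio equals $N[f(\os) - f(m)] + \tfrac{1}{2}\log\bigl[(1-m^2)|\hbp''(m)|\bigr] + o(1)$. A Taylor expansion of $f$ at $m$ combined with tightness of $\sqrt{N}(\os - m)$ --- under $\mu_m^{\otimes N}$ via the classical CLT, and under $\p_{\beta,p}(\cdot \mid \os > 0)$ via the same Laplace analysis applied to the posterior density of $\os$ --- shows that the log-likelihood ratio converges weakly under either measure to $c_* + \tfrac{1}{2}f''(m) Z^2$ for an explicit Gaussian $Z$; one checks by direct moment computation (using $f''(m) = -\hbp''(m) + 1/(1-m^2)$) that the exponential of this limit has expectation exactly $1$. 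Le Cam's first lemma then delivers mutual contiguity.

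Suppose for contradiction that $\hat\theta_N$ is consistent for $\theta \in \Theta_m$. Pick distinct $\theta_1, \theta_2 \in \Theta_m$ and $\eps > 0$ so small that $B(\theta_1,\eps) \cap B(\theta_2,\eps) = \emptyset$. Consistency gives $\p_{\theta_1}(\hat\theta_N \in B(\theta_1,\eps)) \to 1$, and since $\p_{\theta_1}(\os > 0)$ tends to $1$ (if $p_1$ is odd) or $1/2$ (if $p_1$ is even), one deduces $\p_{\theta_1}(\hat\theta_N \in B(\theta_1,\eps) \mid \os > 0) \to 1$. By the mutual contiguity of $\p_{\theta_1}(\cdot \mid \os > 0)$ and $\p_{\theta_2}(\cdot \mid \os > 0)$ (both contiguous to the common reference $\mu_m^{\otimes N}$), the same conditional probability under $\theta_2$ also tends to $1$, so
\begin{equation*}
\p_{\theta_2}\bigl(\hat\theta_N \in B(\theta_1,\eps)\bigr) \;\geq\; \p_{\theta_2}(\os > 0) \cdot (1 - o(1)) \;\geq\; \tfrac{1}{2} - o(1).
\end{equation*}
This contradicts the consistency requirement $\p_{\theta_2}(\hat\theta_N \in B(\theta_2,\eps)) \to 1$, since the two balls are disjoint.

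The main technical obstacle I foresee is the degenerate case $\hbp''(m) = 0$ (a flat maximum), where the Gaussian CLT for $\os$ under $\p_{\beta,p}$ fails and the fluctuations are instead of order $N^{-1/(2k)}$ for the first $k \geq 2$ with $f^{(2k)}(m) \neq 0$. However, the same Laplace/Taylor argument carried out at the correct scale still produces a weakly convergent log-likelihood ratio with a tight, non-degenerate limit whose exponential has mean $1$; since Le Cam's first lemma only requires this, the contiguity conclusion and the rest of the two-point testing argument go through unchanged.
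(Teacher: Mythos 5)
Your core idea coincides with the paper's: the product Rademacher measure with mean $m$ (the common maximizer) serves as a contiguous reference for every $(\beta,p)\in\Theta_m$, and contiguity is incompatible with consistently separating two points. The execution differs in weight. The paper only proves the one direction of contiguity it actually needs, namely $\p_{\beta,p}(A_N)\to 0\Rightarrow \mathbb{Q}(A_N)\to 0$, and does so with a soft argument: on the event $\{|\sqrt{N}(\bar X-m)|\le K\}$ the ratio $\mathbb{Q}(\bm X)/\p_{\beta,p}(\bm X)$ is bounded by a constant $C_K$ (using only $2^{-N}Z_N=e^{NH_{\beta,p}(m)}\Theta(1)$, no sharp constant), and the complement of that event is killed under $\mathbb{Q}$ by the i.i.d.\ CLT. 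With $\mathbb{Q}(T_N\in\mathcal{B}_i)\to 1$ for both $i$, the contradiction is immediate, and no conditioning on $\{\bar X>0\}$ is ever required --- the symmetric mode at $-m$ for even $p$ is harmless in this direction. You instead go for mutual contiguity of the conditioned measures via Le Cam's first lemma, which forces you to (i) condition on $\{\bar X>0\}$, (ii) nail the $O(1)$ term in $\log Z_N$, and (iii) verify the mean-one condition. Your computation is essentially right (note the sign: $f''(m)=H''_{\beta,p}(m)+1/(1-m^2)$, not $-H''_{\beta,p}(m)+1/(1-m^2)$; with the correct relation, $1-f''(m)(1-m^2)=(1-m^2)|H''_{\beta,p}(m)|$ and the mean is indeed $1$), and your two-point argument at the end is valid, so the proof goes through. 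What the paper's route buys is the avoidance of all of this precision; what yours buys is the stronger statement that the models in $\Theta_m$ are pairwise mutually contiguous after conditioning.

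One side claim is wrong, though harmlessly so. In the degenerate case $H''_{\beta,p}(m)=0$ you assert the limit of the likelihood ratio still has expectation $1$; in fact there $f''(m)(1-m^2)=1$, the limit is $e^{c_*+W^2/2}$ with $W$ standard Gaussian, whose expectation is infinite, and mutual contiguity genuinely fails (fluctuations of $\bar X$ under $\p_{\beta,p}$ are of order $N^{-1/4}$, so $\p_{\beta,p}$ is not contiguous to $\mathbb{Q}$). The reason this does not damage your proof is that the case is vacuous: for $p\ge 3$ and $(\beta,p)\in\Theta_m$, the unique non-negative global maximizer is always non-degenerate ($H''_{\beta,p}(0)=-1$ when $m=0$, and for $m>0$ one has $\beta>\beta^*(p)$, where the positive global maximizer satisfies $H''_{\beta,p}(m)<0$ strictly; degenerate stationary points occur only at sub-threshold $\beta$ where they are not global maximizers). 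You should verify and state this non-degeneracy rather than attempt to argue through the degenerate case.
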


The following lemma is crucial for proving Theorem \ref{thm01}.

\begin{lemma}\label{contig}
For every $m\in [0,1)$, denote by $\mu$ the distribution of a Rademacher random variable with mean $m$. Then, the product measure $\mathbb{Q} := \otimes_{i=1}^N \mu$ is contiguous to $\p_{\beta,p}$ for all $m\in [0,1)$ and all $(\beta,p)\in \Theta_m$.
\end{lemma}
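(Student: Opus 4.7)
My plan is to apply Le Cam's first lemma, which says that $\mathbb{Q} \triangleleft \p_{\beta,p}$ follows once I show that every subsequential weak limit $V$ of $L_N := d\mathbb{Q}/d\p_{\beta,p}$ (under $\p_{\beta,p}$) satisfies $\e[V] = 1$. The key structural fact is that both $\p_{\beta,p}$ and $\mathbb{Q}$ depend on $\bm x$ only through $\bar{x}$: setting $h := \tanh^{-1}(m)$ and $I(x) := \tfrac{1}{2}[(1+x)\log(1+x)+(1-x)\log(1-x)]$, a direct computation gives $\mathbb{Q}(\bm x) = 2^{-N}(1-m^2)^{N/2}\, e^{N h \bar{x}}$, so that
\[
L_N(\bm x) \;=\; 2^{-N}(1-m^2)^{N/2}\, Z_N(\beta,p)\, \exp\!\bigl(N[h\bar{x} - \beta \bar{x}^p]\bigr).
\]
Using the identity $\tfrac{1}{2}\log(1-m^2) = I(m) - mh$, this rewrites as $L_N = 2^{-N} Z_N(\beta,p)\, e^{N g(\bar{x})}$ with $g(x) := I(m) + (x-m)h - \beta x^p$. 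The hypothesis $(\beta,p) \in \Theta_m$ yields $g(m) = -\hbp(m)$, $g'(m) = 0$ (via the critical-point relation $h = \beta p m^{p-1}$ forced by $m$ being a maximizer of $\hbp$), and $g''(m) = -\beta p(p-1) m^{p-2} =: -\alpha$.

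The next step is a sharp Laplace-type asymptotic for the partition function. Using Stirling's formula on $\binom{N}{(N+s)/2}$ and passing to an integral via the local central limit theorem in $Z_N(\beta,p) = \sum_{\bm x} e^{\beta N \bar{x}^p}$, I obtain
\[
Z_N(\beta,p) \;=\; \frac{k \cdot 2^N \cdot e^{N \hbp(m)}}{\sqrt{(1-m^2)(-\hbp''(m))}}\,\bigl(1+o(1)\bigr),
\]
where $k \in \{1,2\}$ is the number of global maximizers of $\hbp$ on $[-1,1]$; by the $x \mapsto -x$ symmetry of $\hbp$, $k=2$ when $p$ is even and $m>0$, and $k=1$ otherwise. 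All global maximizers share the same height and curvature, so their Laplace contributions coincide. Substituting back,
\[
L_N(\bm x) \;=\; \frac{k\,(1+o(1))}{\sqrt{(1-m^2)(-\hbp''(m))}}\,\exp\!\bigl(N[g(\bar{x})-g(m)]\bigr).
\]

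To identify $V$ I use the mode decomposition of $\bar X$ under $\p_{\beta,p}$: $\bar X$ concentrates on the set $M$ of global maximizers with asymptotic mass $1/k$ on each, and conditional on $\bar X$ lying near a mode $m' \in M$, the local CLT gives $\sqrt{N}(\bar X - m') \Rightarrow N(0, -1/\hbp''(m'))$. For every $m' \in M$ with $m' \ne m$ (only $m' = -m$ when $p$ is even) one checks directly that $g(m')-g(m) < 0$ (indeed $g(-m)-g(m) = -2hm$ in the $p$-even case), so $L_N \to 0$ along these modes. On the mode at $m$, a Taylor expansion yields $N[g(\bar x) - g(m)] \Rightarrow -\alpha Y^2/2$ with $Y \sim N(0,-1/\hbp''(m))$, giving $L_N \Rightarrow L^+ := \bigl(k/\sqrt{(1-m^2)(-\hbp''(m))}\bigr)\, e^{-\alpha Y^2/2}$. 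A Gaussian moment calculation based on the identity $-\hbp''(m) = (1-m^2)^{-1} - \alpha$ produces $\e[e^{-\alpha Y^2/2}] = \sqrt{(1-m^2)(-\hbp''(m))}$, hence $\e[L^+] = k$. Combining the mode contributions, $\e[V] = (1/k)\cdot k + ((k-1)/k)\cdot 0 = 1$, and Le Cam's first lemma yields $\mathbb{Q} \triangleleft \p_{\beta,p}$.

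The main technical hurdle is converting the Laplace heuristics into rigorous weak convergence of $L_N$ on the $1/\sqrt{N}$ scale near each mode, while simultaneously verifying that the contribution from configurations with $\bar X$ away from $M$ does not leak into the limit; this is handled by a Cramér-type large-deviation bound together with the observation that wherever $L_N$ is exponentially large, it is integrated against an exponentially smaller $\p_{\beta,p}$-mass, with total $\mathbb{Q}$-contribution $o(1)$ because $\mathbb{Q}$ itself concentrates at $m$. The edge case $m=0$ (where $h=0$, $k=1$, and the limit $V$ degenerates to the constant $1$) fits into the same framework with no separate argument.
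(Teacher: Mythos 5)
Your argument is correct, but it takes a genuinely different route from the paper's. The paper proves contiguity directly from the definition: it restricts to the event $E_K=\{\sqrt{N}(\bar X-m)\le K\}$, bounds the likelihood ratio $\mathbb{Q}(\bm X)/\p_{\beta,p}(\bm X)$ there by a constant $C_K$ using only the crude asymptotic $2^{-N}Z_N(\beta,p)=e^{NH_{\beta,p}(m)}\Theta(1)$ together with a second-order Taylor bound, and then kills $\mathbb{Q}(E_K^c)$ by the CLT under the product measure before letting $K\to\infty$. You instead go through Le Cam's first lemma, which forces you to identify the exact weak limit of $L_N=d\mathbb{Q}/d\p_{\beta,p}$ under $\p_{\beta,p}$ and hence to know the partition function to constant order, i.e.\ $2^{-N}Z_N\sim k\,e^{NH_{\beta,p}(m)}/\sqrt{(1-m^2)(-H_{\beta,p}''(m))}$, the mode decomposition of $\bar X$, and the conditional CLT at each nondegenerate maximizer. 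I checked your algebra and it is right: $\mathbb{Q}(\bm x)=2^{-N}(1-m^2)^{N/2}e^{Nh\bar x}$, the stationarity relation $h=\beta pm^{p-1}$ gives $g'(m)=0$, the identity $-H_{\beta,p}''(m)=(1-m^2)^{-1}-\alpha$ yields $\e[e^{-\alpha Y^2/2}]=\sqrt{(1-m^2)(-H_{\beta,p}''(m))}$, and the $k$'s cancel to give $\e[V]=1$ exactly (a reassuring consistency check, since $\e[V]\le 1$ always). Two remarks. First, for the Le Cam route you only need weak convergence of $L_N$ under $\p_{\beta,p}$, so the behaviour of $L_N$ on the exponentially rare event $\{\bar X\notin M\}$ is irrelevant; your closing worry about "exponentially large $L_N$ against exponentially small mass" belongs to a direct verification of $\e_{\p}[L_N\ind_{A_N}]\to\e[V\ind_A]$ and is not needed here. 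Second, the trade-off is real: your approach is heavier (it needs the sharp Laplace constant and the local CLT at the modes, all of which must be quoted from or reproved along the lines of Lemmas 3.1--3.4 of the cited reference), but it buys more --- it identifies the limit of the likelihood ratio and would give mutual contiguity with no extra work, whereas the paper's truncation argument is deliberately engineered to need only $\Theta(1)$ precision and one-sided bounds.
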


\begin{proof}
    To begin with, note that on the event $E_K:= \{\sqrt{N}(\bar{X}-m) \le K\}$, we have: 
    \begin{eqnarray*}
\frac{\mathbb{Q}(\bm X)}{\p_{\beta,p}(\bm X)} &=& \frac{Z_N(\beta,p)\prod_{i=1}^N \left(\frac{1+m}{2}\right)^{(1+X_i)/2} \left(\frac{1-m}{2}\right)^{(1-X_i)/2}}{\exp(\beta N \bar{X}^p)}\\ &=& \frac{2^{-N} Z_N(\beta,p)}{\exp\left( N\left[\beta \bar{X}^p - \frac{1}{2}\left\{(1+\bar{X})\log(1+m) +(1-\bar{X})\log (1-m)\right\} \right] \right)}~.
    \end{eqnarray*}
    Now, by Lemma 3.2 and Lemma 3.4 in \cite{jaesungcw}, we have:
    $$2^{-N} Z_N(\beta,p) = e^{NH_{\beta,p}(m)}\Theta(1)~.$$
Also, by Taylor expansion, we have:
\begin{eqnarray*}
    &&\beta \bar{X}^p - \frac{1}{2}\left\{(1+\bar{X})\log(1+m) +(1-\bar{X})\log (1-m)\right\} - H_{\beta,p}(m)\\&=& H_{\beta,p}(\bar{X}) - H_{\beta,p}(m) + \frac{1}{2}\left\{(1+\bar{X})\log\left(\frac{1+\bar{X}}{1+m}\right) +(1-\bar{X})\log \left(\frac{1-\bar{X}}{1-m}\right)\right\}\\&=& O\left((\bar{X}-m)^2\right) + (\bar{X}-m)\left(\frac{1+\bar{X}}{1+m} -\frac{1-\bar{X}}{1-m}\right)= O\left((\bar{X}-m)^2\right).
\end{eqnarray*}
Hence, we have:
$$   \frac{\mathbb{Q}(\bm X)}{\p_{\beta,p}(\bm X)} = \Theta(1) e^{NO\left((\bar{X}-m)^2\right)} \le C_K$$
for some constant $C_K$.
Hence, for any event $A_N \subseteq \{-1,1\}^N$, we have:
$$
    \mathbb{Q}(\bm X\in A_N , E_K) =\e_{\p_{\beta,p}} \left[\frac{\mathbb{Q}(\bm X)}{\p_{\beta,p}(\bm X)} \mathbbm{1}_{\bm X\in A_N , E_K}\right]\le C_K \p(\bm X\in A_N , E_K),
$$
thereby giving us:
$$\mathbb{Q}(\bm X \in A_N) \le C_K \p(\bm X\in A_N) + \mathbb{Q}(E_K^c)~.$$ Now, suppose that $\p(\bm X\in A_N) \rightarrow 0$. Then, for every $K>0$, we have we have:
$$\limsup_{N\rightarrow \infty} \mathbb{Q}(\bm X \in A_N) \le \limsup_{N\rightarrow \infty} \mathbb{Q}\left(\sqrt{N}(\bar{X}-m) > K\right) = \p(N(0,1-m^2)>K).$$ Taking limit as $K\rightarrow \infty$ throughout the above inequality, we can conclude that $\mathbb{Q}(\bm X \in A_N) \rightarrow 0$, which completes the proof of Lemma \ref{contig}.   
\end{proof}

With Lemma \ref{contig} in hand, we are now ready to prove Theorem \ref{thm01}.

\begin{proof}[Proof of Theorem \ref{thm01}]
    Suppose that there exists a sequence $T_N(\bm X) \in \mathbb{R}^2$ of consistent estimators of $(\beta,p)$ on $\Theta_m$. Fixing two different points $(\beta_1,p_1)$ and $(\beta_2,p_2)\in \Theta_m$, we can construct disjoint neighborhoods $\mathcal{B}_1$ and $\mathcal{B}_2$ around them, respectively. By consistency of $T_N(\bm X)$ on $\Theta_m$, we have:
    $$\p_{\beta_i,p_i}(T_N(\bm X) \in \mathcal{B}_i) \rightarrow 1\quad\text{for}~ i=1,2.$$ By Lemma \ref{contig}, we thus have:
    $$\mathbb{Q}(T_N(\bm X) \in \mathcal{B}_i) \rightarrow 1\quad\text{for}~ i=1,2,$$
    which contradicts the facts that $\mathcal{B}_1$, $\mathcal{B}_2$ are disjoint, and $\mathbb{Q}$ is a probability measure.
\end{proof}

\begin{remark}\label{rem1}
    Our argument implies that consistent estimation of $p$ is impossible if $\beta$ is unknown. For, if there were such a consistent estimator $\hat{p} :=\hat{p}(\bm X)$, then we could choose $(\beta_1,p_1)$ and $(\beta_2,p_2)$ from some $\Theta_m$, construct disjoint open intervals $\mathcal{I}_1$ and $\mathcal{I}_2$ around $p_1$ and $p_2$ respectively, and argue from contiguity, that $\mathbb{Q}(\hat{p}(\bm X) \in \mathcal{I}_i) \rightarrow 1$ for $i=1,2$, a contradiction!
\end{remark}

The question now, is that how do the sets $\Theta_m$ look like for different values of $m\in [0,1)$? To answer this, let us define for each $(\beta,p)$,
$$\beta^*(p) := \sup \left\{\beta >0: \sup_{x\in [0,1]}  H_{\beta,p}(x) = 0\right\}~.$$
It follows from \cite{jaesungcw} that for $p\in D$, the function $H_{\beta,p}$ has a unique postitive maximizer $m_*(\beta,p)$ if $\beta > \beta^*(p)$. By convention, we define $m_p = m_*(\beta^*(p),p) := \lim_{\beta \rightarrow \beta^*(p)^-} m_*(\beta,p)$.

\begin{prop}\label{thetamsize}
   The sequence $\{m_p\}_{p\in D} \rightarrow 1$ as $p\rightarrow \infty$, and can be sorted in ascending order as $0<m_{p_1}\le m_{p_2}\le \ldots < 1$. Further, if we denote $\Theta_m\Big|_2$ to be the projection of $\Theta_m$ onto the $p$-coordinate, then

   \[   
\Theta_m\Big|_2 = 
     \begin{cases}
       \emptyset &\quad\text{if}~0<m\le m_{p_1},\\
       \{p_1,\ldots,p_k\} &\quad\text{if}~ m_{p_k} < m \le m_{p_{k+1}},~k\ge 1.\\
     \end{cases}
\]
\end{prop}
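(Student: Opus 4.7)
The plan is to first show $m_p\to 1$ via a critical-point identity, then to derive the sorting and the description of $\Theta_m\Big|_2$ using the map $\beta\mapsto m_*(\beta,p)$.

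For the first step, at $\beta=\beta^*(p)$, continuity of $\sup_{x\in[0,1]}\hbp(x)$ in $\beta$ together with the definition of $\beta^*(p)$ force both $0$ and $m_p$ to be global maximizers of $\hbp$ with value $0$. Since $m_p\in(0,1)$ is an interior maximum, $\hbp(m_p)=0$ and $\hbp'(m_p)=0$. Writing $g(m):=\frac{1}{2}\{(1+m)\log(1+m)+(1-m)\log(1-m)\}$, these two conditions combine to
\[ p \;=\; \frac{m_p\,\mathrm{artanh}(m_p)}{g(m_p)} \;=:\; f(m_p). \]
The function $f$ is continuous on $(0,1)$ with $f(m)\to 2$ as $m\to 0^+$ (since $g(m)=m^2/2+O(m^4)$ and $m\,\mathrm{artanh}(m)=m^2+O(m^4)$) and $f(m)\to\infty$ as $m\to 1^-$ (since $\mathrm{artanh}(m)\to\infty$ while $g(m)\to\log 2$). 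Any subsequential limit of $\{m_p\}$ in $[0,1)$ would thus keep $f(m_p)$ bounded, contradicting $f(m_p)=p\to\infty$; so $m_p\to 1$.

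Given $m_p\to 1$, for each $c<1$ only finitely many $m_p$ are $\le c$, so $\{m_p\}_{p\in D}$ admits an ascending enumeration $m_{p_1}\le m_{p_2}\le\ldots$ whose minimum is attained; positivity $m_{p_1}>0$ holds because every $m_p$ is an interior positive critical point of $\hbp$. For the characterization of $\Theta_m\Big|_2$, the implicit function theorem applied to $p\beta m^{p-1}=\mathrm{artanh}(m)$, together with $\hbp''(m_*(\beta,p))<0$, shows that $\beta\mapsto m_*(\beta,p)$ is continuous and strictly increasing on $(\beta^*(p),\infty)$ with right-limit $m_p$ at $\beta^*(p)$ and limit $1$ at infinity, so its image is exactly $(m_p,1)$. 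Since $m_*(\beta,p)$ is the unique non-negative global maximizer of $\hbp$ for $\beta>\beta^*(p)$, this gives $p\in\Theta_m\Big|_2 \iff m>m_p$, and the claimed case distinction follows by locating $m$ within the sorted sequence $\{m_{p_k}\}$.

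I expect the main obstacle to be the fine analysis of $\beta\mapsto m_*(\beta,p)$ near $\beta=\beta^*(p)$, specifically establishing that the right-limit equals $m_p>0$ rather than merging with $0$. This encodes the first-order nature of the phase transition for $p\ge 3$ and should follow from the saddle-node picture of the critical points of $\hbp$ developed in \cite{jaesungcw}.
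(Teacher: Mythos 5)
Your proof is correct and reaches the same endpoint, but the route to $m_p\to 1$ is genuinely different from the paper's. The paper uses the elementary bound $H_{\beta,p}(x)\le \beta x^p - x^2/2$ to deduce $m_*(\beta,p)\ge (2\beta)^{1/(2-p)}$ and hence $m_p\ge (2\beta^*(p))^{1/(2-p)}\to 1$ (using the boundedness of $\beta^*(p)$ from \cite{jaesung1}); you instead exploit the exact criticality identities $H_{\beta^*(p),p}(m_p)=0$ and $H_{\beta^*(p),p}'(m_p)=0$ to get $p=f(m_p)$ with $f(m)=m\tanh^{-1}(m)/g(m)$, and conclude from $f$ being bounded on compact subsets of $[0,1)$. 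Your identity is sharper (it pins down $m_p$ exactly rather than bounding it), at the cost of presupposing the very fact you flag at the end: that the second global maximizer at $\beta=\beta^*(p)$ is strictly positive, i.e.\ that the transition is first order for $p\ge 3$. The paper's quadratic bound yields this positivity for free, but note that your own identity can be upgraded to do the same without appealing to \cite{jaesungcw}: for any $\beta>\beta^*(p)$ the maximizer $m=m_*(\beta,p)$ satisfies $\beta m^p>g(m)$ and $\beta p m^{p-1}=\tanh^{-1}(m)$, whence $f(m)>p\ge 3$, while $f(m)\to 2$ as $m\to 0^+$; so $m_*(\beta,p)$ is bounded away from $0$ uniformly in $\beta>\beta^*(p)$ and the right-limit $m_p$ is positive. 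For the characterization of $\Theta_m\big|_2$ your argument is structurally identical to the paper's (strict monotonicity, continuity and the limits of $\beta\mapsto m_*(\beta,p)$, giving image $(m_p,1)$ and the equivalence $p\in\Theta_m\big|_2\iff m>m_p$); you derive these properties from the implicit function theorem and nondegeneracy $H_{\beta,p}''(m_*(\beta,p))<0$, where the paper cites Lemmas 11 and B.3 of \cite{Bahadursm} — the nondegeneracy away from $\beta^*(p)$ is itself a fact from that literature, so you are not saving a citation there, merely relocating it.
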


The proof of Proposition \ref{thetamsize} is technical, and is given in Appendix \ref{sec:apA}. Note that for $m\in (0,1)$, the set $\Theta_m$ is uniquely determined by its projection $\Theta_m\Big|_2$, because every $p\in \Theta_m\Big|_2$ uniquely corresponds to the element $(\beta_p,p)\in \Theta_m$, where $\beta_p := p^{-1}m^{1-p}\tanh^{-1}(m)$. Proposition \ref{thetamsize} thus gives a complete description of the family of sets $\Theta_m$ for all values of $m\in (0,1)$, and says that although each of these sets is finite, we can choose sets as large as possible from this family. The following proposition describes the set $\Theta_0$.

\begin{prop}\label{mzeroprop}
    $\Theta_0 = \{(\beta,p)\in (0,\infty)\times D: \beta < \beta^*(p)\}$.
\end{prop}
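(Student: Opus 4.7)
The plan is to reduce the statement to two elementary facts about the single-variable function $f(\beta) := \sup_{x \in [0,1]} H_{\beta,p}(x)$ and then use a perturbation argument for the uniqueness part. Since $H_{\beta,p}(0) = 0$, we automatically have $f(\beta) \geq 0$, with $0$ a global maximizer iff $f(\beta) = 0$. Moreover, for each fixed $x \in [0,1]$, $\beta \mapsto H_{\beta,p}(x)$ is nondecreasing (strictly so for $x > 0$), hence $f$ itself is nondecreasing; combined with its continuity in $\beta$ (as the sup of an equi-Lipschitz family), the set $\{\beta > 0 : f(\beta) = 0\}$ is an interval of the form $(0, \beta^*(p)]$. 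Thus $f(\beta) = 0$ is equivalent to $\beta \leq \beta^*(p)$.

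For the forward inclusion ($\beta < \beta^*(p) \Rightarrow (\beta,p) \in \Theta_0$), since $\beta < \beta^*(p)$ the above gives $f(\beta) = 0$, so $0$ is a non-negative global maximizer. To see uniqueness, suppose for contradiction that some $x_0 \in (0,1]$ also satisfies $H_{\beta,p}(x_0) = 0$. Pick any $\beta' \in (\beta, \beta^*(p))$; then
\[ H_{\beta',p}(x_0) = H_{\beta,p}(x_0) + (\beta' - \beta) x_0^p = (\beta' - \beta) x_0^p > 0, \]
which contradicts $f(\beta') = 0$.

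For the reverse inclusion ($(\beta,p) \in \Theta_0 \Rightarrow \beta < \beta^*(p)$), uniqueness of $0$ as a non-negative maximizer forces $H_{\beta,p}(x) \leq 0$ everywhere, so $f(\beta) = 0$ and hence $\beta \leq \beta^*(p)$. It remains to exclude $\beta = \beta^*(p)$, and this is where I would appeal to Proposition \ref{thetamsize}: the proposition asserts that $m_p > 0$ for every $p \in D$. Combining the definition $m_p = \lim_{\beta \to \beta^*(p)^+} m_*(\beta,p)$ with the joint continuity of $H_{\beta,p}(x)$ in $(\beta, x)$ and the continuity of $f$, one passes to the limit $\beta \downarrow \beta^*(p)$ in the identity $H_{\beta,p}(m_*(\beta,p)) = f(\beta)$ to obtain $H_{\beta^*(p),p}(m_p) = f(\beta^*(p)) = 0$. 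This exhibits the positive point $m_p$ as a second non-negative global maximizer of $H_{\beta^*(p),p}$, contradicting the assumption that $0$ is the unique such maximizer.

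The only subtle step is the last one: justifying that the positive root $m_p$ remains a maximizer at the boundary $\beta = \beta^*(p)$ itself, since $m_*(\beta,p)$ is only defined strictly above the threshold. I expect this to be the main (mild) obstacle, and the resolution is purely a continuity-plus-limit argument combined with the positivity of $m_p$ guaranteed by Proposition \ref{thetamsize}.
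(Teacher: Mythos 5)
Your proof is correct, but it takes a genuinely different route from the paper: the paper disposes of Proposition \ref{mzeroprop} in one line by citing three facts from the reference \cite{jaesungcw} (uniqueness of the maximizer $0$ below $\beta^*(p)$, two maximizers at $\beta^*(p)$, a unique positive maximizer above $\beta^*(p)$), whereas you re-derive the first two of these almost from scratch. Your derivation of the forward inclusion is particularly clean: the monotonicity and $1$-Lipschitz continuity of $f(\beta)=\sup_x H_{\beta,p}(x)$ give $\{f=0\}=(0,\beta^*(p)]$, and the perturbation $H_{\beta',p}(x_0)=(\beta'-\beta)x_0^p>0$ kills any putative second maximizer --- this is more elementary and more self-contained than an appeal to the literature. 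For the boundary case $\beta=\beta^*(p)$ you still lean on two external inputs: the existence of the unique positive global maximizer $m_*(\beta,p)$ for $\beta>\beta^*(p)$ (cited from \cite{jaesungcw} in the paper) and the positivity of $m_p$ from Proposition \ref{thetamsize}; there is no circularity, since the appendix proof of Proposition \ref{thetamsize} does not use Proposition \ref{mzeroprop}, and in fact you could quote directly the bound $m_*(\beta,p)\ge(2\beta)^{1/(2-p)}$ established there rather than the statement of the proposition. Two small remarks on your last step: you correctly read $m_p$ as the limit of $m_*(\beta,p)$ as $\beta\downarrow\beta^*(p)$ (the paper's $\beta\to\beta^*(p)^-$ is evidently a typo, since $m_*(\cdot,p)$ is only defined above the threshold); and if you want to avoid presupposing that this limit exists, compactness of $[0,1]$ lets you run the same continuity argument along any convergent subsequence, with the lower bound $(2\beta^*(p))^{1/(2-p)}>0$ guaranteeing the subsequential limit is positive. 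Net effect: your proof is longer but exposes the mechanism that the paper hides behind the citation, at the cost of still importing the one genuinely nontrivial structural fact (uniqueness of the positive maximizer above threshold).
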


The proof of Proposition \ref{mzeroprop} follows from the following three facts proved in \cite{jaesungcw}:
\begin{enumerate}
    \item If $\beta<\beta^*(p)$, then $0$ is the unique global maximizer of $H_{\beta,p}$.
    \item If $\beta =\beta^*(p)$, then $H_{\beta.p}$ has two different non-negative global maximizers.
    \item If $\beta>\beta^*(p)$, then $H_{\beta,p}$ has a unique non-negative global maximizer, which happens to be positive.
\end{enumerate}

\begin{remark}
	All the results in this section also hold almost surely for the somewhat more general $p$-spin Erd\H{o}s-R\'enyi Ising model \cite{barra2}, where the interaction tensor $\bm J$ in \eqref{genising} is given by $\alpha^{-1} N^{1-p}\bm A$, with the entries $A_{i_1,\ldots,i_p}$ of the tensor $\bm A$ being i.i.d. Bernoulli random variables with mean $\alpha$, for some fixed $\alpha\in (0,1)$. This follows from the fact that the Erd\H{o}s-R\'enyi Ising and Curie-Weiss measures are mutually contiguous, which follows from Lemma 6.6 in \cite{Bahadursm}. 
	\end{remark}

\section{Estimation of $p$ when $\beta$ is Known}
Throughout this section, we will assume that the parameter $\beta$ is known. To begin with, for every $\beta >0$, let us define the following two sets:
$$U_\beta :=\{p\ge 2:\beta^*(p) > \beta\}\quad\text{and}\quad L_\beta :=\{p\ge 2:\beta^*(p) < \beta\}.$$
 By Lemma A.1 in \cite{jaesung1}, the set $U_\beta$ is empty if $\beta \ge \log 2$, and is of the form $\{q,q+1,\ldots\}$ for some integer $q\ge 2$ otherwise.
\begin{theorem}\label{pmp}
    When $\beta$ is known, there does not exist any sequence of estimators which is consistent for $p \in U_\beta$.
\end{theorem}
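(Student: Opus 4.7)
The plan is to reduce the theorem to a direct application of Lemma \ref{contig} at $m=0$, combined with Proposition \ref{mzeroprop}, exactly in the spirit of Remark \ref{rem1}. The only preliminary observation needed is that $U_\beta$ is structurally simple: by the cited Lemma A.1 of \cite{jaesung1}, $U_\beta$ is either empty (when $\beta\ge\log 2$), in which case the statement is vacuous, or else of the form $\{q,q+1,\ldots\}$, hence infinite.

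Assume therefore $\beta<\log 2$ and suppose for contradiction that $\hat p(\bs)$ is a consistent estimator of $p$ over $U_\beta$. Pick any two distinct $p_1,p_2\in U_\beta$; by definition $\beta<\beta^*(p_i)$ for $i=1,2$, so by Proposition \ref{mzeroprop} both parameter tuples $(\beta,p_1)$ and $(\beta,p_2)$ lie in $\Theta_0$, i.e.\ for both of them the function $H_{\beta,p_i}$ has $0$ as its unique non-negative global maximizer. This is precisely the input needed to invoke Lemma \ref{contig} with $m=0$: the uniform measure $\mathbb{Q}=\otimes_{i=1}^N \mathrm{Unif}\{-1,1\}$ on $\sa$ is contiguous to both $\p_{\beta,p_1}$ and $\p_{\beta,p_2}$.

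Now choose disjoint open intervals $\mathcal{I}_1\ni p_1$ and $\mathcal{I}_2\ni p_2$ (for instance of radius $1/2$, since the $p_i$ are distinct integers). Consistency gives
\[
\p_{\beta,p_i}\bigl(\hat p(\bs)\in \mathcal{I}_i\bigr)\longrightarrow 1,\qquad i=1,2,
\]
and contiguity then forces $\mathbb{Q}\bigl(\hat p(\bs)\in\mathcal{I}_i\bigr)\to 1$ for both $i=1,2$, contradicting the fact that $\mathcal{I}_1\cap\mathcal{I}_2=\emptyset$ and $\mathbb{Q}$ is a probability measure. This contradiction completes the argument.

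There is essentially no hard step here: all the analytic work has already been absorbed into Lemma \ref{contig} (the uniform bound on the Radon-Nikodym derivative on the event $\{\sqrt{N}|\bar X|\le K\}$, together with the Gaussian tail of $\sqrt N\bar X$ under $\mathbb{Q}$) and into the characterization of $\Theta_0$ in Proposition \ref{mzeroprop}. The mildest subtlety is simply remembering to handle the vacuous case $U_\beta=\emptyset$ and to ensure $|U_\beta|\ge 2$ so that two distinct $p_i$ can be selected; both are immediate from the $\{q,q+1,\ldots\}$ structure of $U_\beta$.
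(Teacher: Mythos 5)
Your proposal is correct and follows essentially the same route as the paper: invoke Proposition \ref{mzeroprop} to place $(\beta,p)$ in $\Theta_0$ for every $p\in U_\beta$, apply Lemma \ref{contig} with $m=0$ to get contiguity of the uniform product measure to each $\p_{\beta,p}$, and conclude by the disjoint-neighborhood contradiction of Remark \ref{rem1}. Your explicit handling of the vacuous case $U_\beta=\emptyset$ is a minor but welcome addition.
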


\begin{proof}
    It follows from Proposition \ref{mzeroprop} and Lemma \ref{contig}, that the $N$-fold product measure of the mean-$0$ Rademacher distribution is contiguous to $\p_{\beta,p}$ for all $p\in U_\beta$. The proof now follows from the argument given in Remark \ref{rem1}.
\end{proof}

\begin{remark}
    One can consider an extension of the model \eqref{cwm} by adding an external magnetic field parameter $h$ as follows:
    \begin{equation}\label{cwmh}
     \p_{\beta,p}(\bm x) := \frac{\exp\left\{\beta N \bar{x}^p + h N \bar{x}\right\}}{Z_N(\beta,p)} \quad \text{for}~\bm x\in \{-1,1\}^n
 \end{equation}
    It is shown in \cite{jaesungcw} that consistent estimation of $h$ is always possible when $\beta$ is known. Proposition \ref{pmp} shows that this is not the case for estimating $p$, which is impossible if $\beta^*(p)>\beta$. This inestimability region also coincides with that for estimating $\beta$ when $p$ is known (see \cite{jaesung1}).
\end{remark}

Turning our attention now to the set $L_\beta$, we break it down to the following two parts:
$$L_\beta^1 := \{p\in L_\beta: \exists ~q\in L_\beta,~ q\ne p~\text{such that}~m_*(\beta,q)=m_*(\beta,p)\}\quad\text{and}\quad L_\beta^2 := L_\beta \setminus L_\beta^1~.$$
where $m_*(\beta,q)$ is defined as the largest non-negative maximizer of $H_{\beta,q}$. 

\begin{theorem}
     When $\beta$ is known, there does not exist any sequence of estimators which is consistent for $p \in L_\beta^1$.
\end{theorem}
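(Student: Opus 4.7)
The plan is to imitate the argument of Remark \ref{rem1}, applied at the common value $m$ of the maximizer rather than at $m=0$. Concretely, fix any $p \in L_\beta^1$ and, by definition of $L_\beta^1$, pick $q \in L_\beta$ with $q \neq p$ and $m := m_*(\beta,p) = m_*(\beta,q)$. Because $p, q \in L_\beta$ means $\beta > \beta^*(p)$ and $\beta > \beta^*(q)$, the third bullet after Proposition \ref{mzeroprop} tells us that $m$ is the unique non-negative global maximizer of both $H_{\beta,p}$ and $H_{\beta,q}$, and moreover $m \in (0,1)$. Thus $(\beta,p)$ and $(\beta,q)$ both lie in $\Theta_m$.

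Next I would apply Lemma \ref{contig} twice, at $(\beta,p)$ and at $(\beta,q)$: the product measure $\mathbb{Q} := \bigotimes_{i=1}^N \mu$ of Rademacher random variables with mean $m$ is contiguous both to $\p_{\beta,p}$ and to $\p_{\beta,q}$. Note that $\mathbb{Q}$ is a single fixed reference measure (depending only on $m$), which is the point that makes the whole argument go through even though $p \neq q$.

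Now suppose for contradiction that $\hat p = \hat p(\bm X)$ is a consistent estimator of $p$ on $L_\beta^1$. Choose disjoint open intervals $\mathcal{I}_1 \ni p$ and $\mathcal{I}_2 \ni q$ in $\mathbb{R}$. By consistency, $\p_{\beta,p}(\hat p \notin \mathcal{I}_1) \to 0$ and $\p_{\beta,q}(\hat p \notin \mathcal{I}_2) \to 0$. Applying the two contiguity relations yields
\[
\mathbb{Q}(\hat p \notin \mathcal{I}_1) \to 0 \quad\text{and}\quad \mathbb{Q}(\hat p \notin \mathcal{I}_2) \to 0,
\]
so $\mathbb{Q}(\hat p \in \mathcal{I}_1) \to 1$ and $\mathbb{Q}(\hat p \in \mathcal{I}_2) \to 1$ simultaneously, which is incompatible with $\mathcal{I}_1 \cap \mathcal{I}_2 = \emptyset$ since $\mathbb{Q}$ is a probability measure. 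This contradicts the existence of $\hat p$.

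There is no real obstacle beyond checking that the hypothesis $p \in L_\beta^1$ really does place both $(\beta,p)$ and $(\beta,q)$ inside the same $\Theta_m$, which is the only content-specific step; everything else is a direct reuse of the contiguity lemma and the Remark \ref{rem1} template. In particular, the assumption $\beta > \beta^*(p)$ is exactly what is needed to rule out the degenerate case $m = 0$ (which would occur in $U_\beta$ and is handled separately in Theorem \ref{pmp}) and to guarantee uniqueness of the non-negative maximizer, so that $(\beta,p) \in \Theta_m$ in the sense required by Lemma \ref{contig}.
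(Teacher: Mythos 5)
Your proposal is correct and follows essentially the same route as the paper: apply Lemma \ref{contig} at the common maximizer $m = m_*(\beta,p) = m_*(\beta,q)$ to get a single reference measure $\mathbb{Q}$ contiguous to both $\p_{\beta,p}$ and $\p_{\beta,q}$, then run the Remark \ref{rem1} contradiction. The only point worth making explicit (which the paper does and you use implicitly when invoking consistency at $q$) is that $q$ itself belongs to $L_\beta^1$, which is immediate from the symmetry of the definition.
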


\begin{proof}
    Once again, by Lemma \ref{contig}, we know that the $N$-fold product of the Rademacher distribution with mean $m_*(\beta,p)$ is contiguous to both the measures $\p_{\beta,p}$ and $\p_{\beta,q}$, where $p\in L_\beta^1$ and $p\ne q\in L_\beta$ is such that $m_*(\beta,q)=m_*(\beta,p)$. Clearly, $q\in L_\beta^1$. Once again, the rest of the proof follows from the arguments of Remark \ref{rem1}. 
\end{proof}

We now show that it is possible to estimate $p$ consistently on the set $L_\beta^2$, if $\beta$ is known. Towards this, let us define the following estimator of $p$:

\begin{equation}\label{algo}
    \hat{p}(\beta,\delta) := \argmin_{2\le q\le \log_{|\bar{X}|^{-1}} (2\beta + \delta) +2~,~\beta^*(q)<\beta} \left|m_*(\beta,q)^2 - \bar{X}^2\right|
\end{equation}
 The intuition behind constructing our estimator, is that $\bar{X}^2 \xrightarrow{P} m_*(\beta,p)^2$ as $N\rightarrow \infty$, and hence, the distance between $\bar{X}^2$ and $m_*(\beta,q)^2$ is expected to be minimized at $q=p$ for large $N$. The next theorem shows that with high probability, this is indeed the case.

\begin{theorem}\label{constestp}
   For every $\delta >0$ and $p\in L_\beta^2$, we have 
   $$\p_{\beta,p}(\hat{p}_{\beta,\delta} = p) = 1-e^{-CN}$$ for some constant $C = C_{\beta,p,\delta}>0$ not depending on $N$.
\end{theorem}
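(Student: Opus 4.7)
The plan is to combine an exponential large-deviation bound for $\bar X^2$ around $m_*(\beta,p)^2$ with the separation guaranteed by $p \in L_\beta^2$, to conclude that with exponentially high probability the estimator $\hat p_{\beta,\delta}$ uniquely minimizes its objective at $q=p$.

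First I would establish the exponential concentration. Since $p\in L_\beta$ means $\beta>\beta^*(p)$, the large-deviation analysis of the magnetization (essentially using the rate function $-H_{\beta,p}$ together with the fact from \cite{jaesungcw} that its only global maximizers are $\pm m_*(\beta,p)$) gives that for every $\epsilon>0$ there exists $C_0 = C_0(\beta,p,\epsilon)>0$ with
\[
\p_{\beta,p}\bigl(|\bar X^2 - m_*(\beta,p)^2| > \epsilon\bigr) \le e^{-C_0 N}.
\]
Thus $\bar X^2$ concentrates exponentially fast on the single value $m_*(\beta,p)^2$.

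Next, on the good event $G_N := \{|\bar X^2 - m_*(\beta,p)^2| \le \epsilon\}$, I would argue $\hat p_{\beta,\delta}(\bs X)=p$ in three sub-steps. (i) The true $p$ lies in the feasible set of the argmin: $p\ge 2$ and $\beta^*(p)<\beta$ are immediate from $p\in L_\beta^2$, while the upper bound $p \le \log_{|\bar X|^{-1}}(2\beta+\delta)+2$ is equivalent to $|\bar X|^{p-2}(2\beta+\delta)\ge 1$, which I would verify by plugging $|\bar X|\approx m_*(\beta,p)$ into the fixed-point identity $p\beta\, m_*(\beta,p)^{p-1} = \tanh^{-1}(m_*(\beta,p))$ and using continuity to shrink $\epsilon$ so that the strict inequality persists throughout $G_N$. (ii) The feasible set is contained in a fixed finite interval $\{2,\ldots,N_0\}$ with $N_0 = N_0(\beta,p,\delta)<\infty$, since on $G_N$ the quantity $|\bar X|$ is bounded away from $1$ (using $m_*(\beta,p)<1$), making the upper bound uniformly finite in $N$. (iii) For each integer $q \in \{2,\ldots,N_0\}\cap L_\beta$ with $q\neq p$, the hypothesis $p\in L_\beta^2$ forces $m_*(\beta,q)^2 \neq m_*(\beta,p)^2$, so $\eta := \min_{q\neq p} |m_*(\beta,q)^2 - m_*(\beta,p)^2|$ is strictly positive.

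Finally I would choose $\epsilon < \eta/3$. Then on $G_N$ the objective at $q=p$ is at most $\eta/3$, while at any other feasible $q$ it is at least $\eta-\epsilon > 2\eta/3$ by the triangle inequality, so the argmin is uniquely attained at $p$. This gives $\p_{\beta,p}(\hat p_{\beta,\delta}=p) \ge \p_{\beta,p}(G_N) \ge 1 - e^{-C_0 N}$, proving the theorem with $C:=C_0$. The main technical obstacle is sub-step (i): one must verify deterministically that the true $p$ satisfies the upper bound in the argmin on the good event, a statement about the maximizer of $H_{\beta,p}$ that requires careful manipulation of the fixed-point equation together with the assumption $\beta>\beta^*(p)$. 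The remaining steps are routine once the concentration bound and the separation in (iii) are in hand.
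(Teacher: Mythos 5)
Your overall architecture matches the paper's proof: exponential concentration of $\bar X^2$ at $m_*(\beta,p)^2$, a separation constant for the competing values $m_*(\beta,q)^2$, and a check that the true $p$ lies in the feasible range of the argmin. Your step (iii) even simplifies the paper's treatment of the separation constant: the paper argues over \emph{all} $q\in L_\beta$ by showing $m_*(\beta,q)$ converges to $0$ or $1$ as $q\to\infty$ (so $m_*(\beta,p)\in(0,1)$ is not an accumulation point), whereas you first truncate to a finite feasible set and take a minimum over finitely many nonzero gaps. That works, modulo a mild circularity you should resolve explicitly: $N_0$ depends on $\epsilon$ and $\eta$ depends on $N_0$, while you then want $\epsilon<\eta/3$; fix this by choosing $\epsilon$ in two stages (first a crude $\epsilon_0$ to pin down $N_0$, then shrink).

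The genuine gap is in your sub-step (i), which you yourself flag as the main obstacle but do not close, and the tool you propose for it does not suffice. Feasibility of the true $p$ amounts to $|\bar X|^{p-2}\ge (2\beta+\delta)^{-1}$, hence (after concentration) to a deterministic lower bound of the form $m_*(\beta,p)^{p-2}\ge (2\beta)^{-1}$. The stationarity identity $p\beta\, m_*^{p-1}=\tanh^{-1}(m_*)$ only gives $p\beta\, m_*^{p-2}\ge 1$, i.e.\ $m_*^{p-2}\ge (p\beta)^{-1}$, which is weaker by a factor of $p/2$ and fails to imply the needed bound once $p>2+\delta/\beta$. The correct route (the one the paper leans on, via its Proposition 1) is the zeroth-order condition rather than the first-order one: since $\beta>\beta^*(p)$, the global maximizer satisfies $H_{\beta,p}(m_*)\ge 0$, and combining this with the elementary inequality $\frac{1}{2}\{(1+x)\log(1+x)+(1-x)\log(1-x)\}\ge x^2/2$ yields $\beta m_*^{p}\ge m_*^2/2$, i.e.\ $2\beta m_*^{p-2}\ge 1$, so that $(2\beta+\delta)m_*^{p-2}>1$ with a strict margin that survives the perturbation $|\bar X^2-m_*^2|\le\epsilon$. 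With that substitution your argument goes through and coincides in substance with the paper's.
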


\begin{proof}
    Suppose that $\bm X \sim \p_{\beta,p}$ for some $p\in L_\beta^2$. It follows from Lemma A.1 in \cite{jaesung1}, and the proof of Proposition \ref{thetamsize} that as $q\rightarrow \infty$, $m_*(\beta,q)\rightarrow 1$ if $\beta \ge \log 2$ and $m_*(\beta,q)\rightarrow 0$ otherwise. Hence, in any case $m_*(\beta,p)\in (0,1)$ is not an accumulation point of the sequence $\{m_*(\beta,q)\}_{q\ge 2}$. This shows that there exists $\varepsilon >0$ such that $|m_*(\beta,q)^2-m_*(\beta,p)^2| >\varepsilon$ for all $q\in L_\beta$ not equal to $p$.

    Now, it follows from the proofs of Lemma 3.1 and Lemma 3.3 in \cite{jaesungcw} that $$\p_{\beta,p}\left(|m_*(\beta,p)^2 -\bar{X}^2| >\frac{\varepsilon}{3}\right) \le e^{-C_1 N}$$ for some constant $C_1 >0$ not depending on $N$. Also, it follows from the proofs of Lemma 3.1 and Lemma 3.3 in \cite{jaesungcw} and the proof of Proposition \ref{thetamsize}, that
    $$\p_{\beta,p}\left(\bar{X}^2 < (2\beta +\delta)^{2/(2-p)}\right) \le e^{-C_2N}\quad \text{i.e.}\quad \p_{\beta,p}\left(p> \log_{|\bar{X}|^{-1}} (2\beta + \delta) +2\right) \le e^{-C_2N}$$ for some constant $C_2 >0$ not depending on $N$. Define $E_1 := \{|m_*(\beta,p)^2 -\bar{X}^2| \le\varepsilon/3\}$ and $E_2 := \{p\le \log_{|\bar{X}|^{-1}} (2\beta + \delta) +2\}$. It is clear that $\hat{p}(\beta,\delta)=p$ on the event $E_1\bigcap E_2$. Theorem \ref{constestp} now follows, since $\p(E_1^c \bigcup E_2^c) \le e^{-C_1N}+e^{-C_2N}$.
\end{proof}
\begin{remark}
    One can tune the parameter $\delta>0$ in \eqref{algo} to allow an optimal number of integers over which the minimization is to be done, in order to ensure proper convergence of $\hat{p}(\beta,\delta)$. Also, in case $\beta<\log 2$, eventually $\beta^*(q)>\beta$, so one just stops at the largest $q$ for which $\beta^*(q) <\beta$ in the minimization \eqref{algo}.
\end{remark}

In the reverse problem of estimating $\beta$ with known $p$, consistent estimation is possible for all $\beta >\beta^*(p)$ (see \cite{jaesungcw}). In contrast to this, a further annoying inestimability region $L_\beta^1$ arises in our problem of estimating $p$ with known $\beta$. The next proposition shows that for almost all $\beta>0$ the set $L_{\beta}^1$ is actually empty, i.e. the entire region $L_\beta$ is estimable.

\begin{prop}\label{annoy}
    There exists a countable set $\Xi \subset (0,\infty)$, such that $L_\beta^1 = \emptyset$ for $\beta \notin \Xi$.
\end{prop}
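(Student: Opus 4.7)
The plan is to parametrize the set of ``bad'' $\beta$ as a countable set indexed by pairs of integers, using the first-order stationarity condition for interior critical points of $H_{\beta,p}$.

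First I would note that for every $p \in L_\beta$, meaning $\beta > \beta^*(p)$, the maximizer $m := m_*(\beta,p)$ lies strictly in $(0,1)$: it is positive by Proposition \ref{mzeroprop} together with the structure results cited from \cite{jaesungcw}, and strictly less than $1$ because
\[H_{\beta,p}'(x) = \beta p x^{p-1} - \tanh^{-1}(x)\]
diverges to $-\infty$ as $x \to 1^-$. Hence $m$ is an interior critical point of $H_{\beta,p}$ and satisfies the first-order condition $\beta p m^{p-1} = \tanh^{-1}(m)$.

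The main step is the following. Suppose $p \in L_\beta^1$ with witness $q \ne p$ in $L_\beta$, so that $m_*(\beta,p) = m_*(\beta,q) = m$. Writing down the first-order condition for both orders at the same $m$ and the same $\beta$, and equating, yields $p m^{p-1} = q m^{q-1}$. For $p \ne q$ (say $p < q$) this forces
\[m = m_{p,q} := (p/q)^{1/(q-p)} \in (0,1),\]
a value depending only on the unordered pair $\{p,q\}$. Plugging this back into the first-order condition determines $\beta$ uniquely as
\[\beta = \beta_{p,q} := \frac{\tanh^{-1}(m_{p,q})}{p\, m_{p,q}^{p-1}}.\]
Therefore, defining $\Xi := \{\beta_{p,q} : p, q \text{ integers} \ge 2,~ p \ne q\}$, the set $\Xi$ is a countable union of singletons indexed by pairs of integers, hence countable; and the contrapositive of the derivation shows that $\beta \notin \Xi$ implies $L_\beta^1 = \emptyset$.

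There is no substantial obstacle beyond the one-line algebraic manipulation and the countability observation: the only subtlety is verifying that $m_*(\beta,p)$ is an interior critical point so that the first-order condition genuinely applies, and this is handled by the structure results already cited. The argument essentially says that the collision of $m_*(\beta,p)$ with $m_*(\beta,q)$ is a codimension-one coincidence in the one-dimensional parameter $\beta$, producing at most one offending $\beta$ per pair $\{p,q\}$.
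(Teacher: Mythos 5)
Your proposal is correct and follows essentially the same route as the paper's proof in Appendix B: equating the first-order conditions $\beta p m^{p-1} = \beta q m^{q-1} = \tanh^{-1}(m)$ forces $m = (p/q)^{1/(q-p)}$ and then pins down $\beta$ as the paper's $s_{p,q}$, so the bad set is countable. Your extra verification that $m_*(\beta,p)$ is an interior critical point is a welcome (if minor) addition that the paper leaves implicit.
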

The proof of Proposition \ref{annoy} is given in Appendix \ref{proofannoy}. Although in the proof of Proposition \ref{annoy} we give an exact enumeration of one countable set $\Lambda$ satisfying Proposition \ref{annoy}, namely:
$$\Lambda :=\{s_{p,q}: q> p \ge 2~\text{are integers}\}~\quad \text{where}\quad s_{p,q} := \frac{1}{p}\left(\frac{p}{q}\right)^{\frac{1-p}{q-p}}\tanh^{-1}\left(\left(\frac{p}{q}\right)^{\frac{1}{q-p}}\right),$$
 our mathematical skills did not allow us to dig in any further. In particular, the following question is open:
\vspace{0.1in}

\noindent \textbf{Open Problem:} Give an exact enumeration of the minimal countable set $\Xi_0 = \{\beta >0: L_\beta^1\ne \emptyset\}$ satisfying Proposition \ref{annoy}.
\vspace{0.1in}

Note that if $\beta = s_{p,q}$ for some integers $q>p\ge 2$, then $m:=(p/q)^{1/(q-p)}$ is a stationary point of both $H_{\beta,p}$ and $H_{\beta,q}$. We can actually say that $\beta \in \Xi_0$ if this stationary point turns out to be a global maximizer of both these functions. However, checking the last condition will likely involve more intricate analysis, and is left as an open question.

\section{Acknowledgement}
\noindent S. M. was supported by the FoS Tier 1 grant WBS A-8001449-00-00. The author thanks Paul Switzer for bringing to notice the problem of estimating $p$, and Luc Devroye for interesting discussions related to this problem.

 \appendix
 \section{Proof of Proposition \ref{thetamsize}}\label{sec:apA}
To show that the sequence $m_p \rightarrow 1$ as $p\rightarrow \infty$, note that by a Taylor expansion, we have:
$$H_{\beta,p}(x) \le \beta x^p - \frac{x^2}{2}\quad\text{for all}~\beta >0~.$$ Hence, $H_{\beta,p}(x) < 0$ for all $x < (2\beta)^{1/(2-p)}$, and so, if $\beta > \beta^*(p)$, then $m_*(\beta,p) \ge (2\beta)^{1/(2-p)}$. Thus, for every $\beta \in (\beta^*(p),\beta^*(p)+\varepsilon)$, we have
$m_*(\beta,p) > (2(\beta^*(p)+\varepsilon))^{1/(2-p)}$. Since $\varepsilon >0$ is arbitrary, we thus have:
\begin{equation}\label{aboveinq}
  m_p \ge (2\beta^*(p))^{1/(2-p)}  
\end{equation}
It is now easy to see that the right-hand side of \eqref{aboveinq} converges to $1$ as $p\rightarrow \infty$, whereas the left-hand side is bounded above by $1$. This proves our claim. 

Now, since the sequence $\{m_p\}_{p\in D}$ is convergent to its supremum, it can be arranged in increasing order as $m_{p_1} \le m_{p_2} \le \ldots < 1$. Also, by \eqref{aboveinq} and Lemma A.1 in \cite{jaesung1}, $m_{p_1} \ge (\log 4)^{1/(2-p_1)}>0$. Suppose that $(\beta,p)\in \Theta_m$ for some $m>0$. It follows from \cite{jaesungcw} that $\beta >\beta^*(p)$. By Lemma 11 in \cite{Bahadursm}, the function $m_*(\beta,p)$ is strictly increasing in $\beta$ on $(\beta^*(p),\infty)$, and hence,
\begin{equation}\label{mstar}
    m=m_*(\beta,p) > m_p~.
\end{equation}
It follows from \eqref{mstar}, that $\Theta_m=\emptyset$ for $m\in (0,m_{p_1}]$.

Let us now assume that $m\in (m_{p_k},m_{p_{k+1}}]$. If $(\beta,p)\in \Theta_m$, then by \eqref{mstar}, we have $m_{p_{k+1}} > m_p$ and hence, $p\in \{p_1,\ldots,p_k\}$. Conversely, suppose that $p=p_i$ for some $1\le i\le k$. Then, $m>m_p$ and hence, there exists $\beta_0>\beta^*(p)$ such that $m>m_*(\beta_0,p)$. By Lemma B.3 in \cite{Bahadursm}, we know that $\lim_{\beta\rightarrow\infty} m_*(\beta,p)=1$, which implies the existence of $\beta_1>\beta^*(p)$ such that $m<m_*(\beta_1,p)$. Finally, Lemma B.3 in \cite{Bahadursm} also states that the function $m_*(\beta,p)$ is continuous in $\beta$ on $(\beta^*(p),\infty)$, which in view of the intermediate value theorem, implies the existence of $\beta\in (\beta_0,\beta_1)$, such that $m=m_*(\beta,p)$, i.e. $(\beta,p) \in \Theta_m$. This completes the proof of Proposition \ref{thetamsize}. 

\section{Proof of Proposition \ref{annoy}}\label{proofannoy}
For every pair of integers $q>p\ge 2$, define:
$$s_{p,q} := \frac{1}{p}\left(\frac{p}{q}\right)^{\frac{1-p}{q-p}}\tanh^{-1}\left(\left(\frac{p}{q}\right)^{\frac{1}{q-p}}\right)~,$$ and let $\Lambda := \{s_{p,q}: q>p\ge 2~\text{are integers}\}$. Clearly, $\Lambda$ is a countable set. Suppose that $L_{\beta}^1\ne \emptyset$. Then, there exist $q>p\ge 2$ such that both the functions $H_{\beta,p}$ and $H_{\beta,q}$ are maximized at the same point $m\in (0,1)$. Hence, we have $H_{\beta,q}'(m)=H_{\beta,p}'(m) =0$, i.e.
$$\beta q m^{q-1} = \beta pm^{p-1} = \tanh^{-1}(m)~.$$
From the first equality, we have $m=(p/q)^{1/(q-p)}$, and from the last equality, we have:
$$\beta = \frac{\tanh^{-1}(m)}{pm^{p-1}} = s_{p,q} \in \Lambda~.$$
Hence, $L_{\beta}^1 = \emptyset$ if $\beta\notin \Lambda$, which completes the proof of Proposition \ref{annoy}.




\end{document}